\documentclass[11pt,a4paper]{article}

\usepackage[T1]{fontenc}
\usepackage[utf8]{inputenc}
\usepackage{lmodern}
\usepackage[margin=26mm]{geometry}
\usepackage{amsmath,amssymb,amsthm}
\usepackage{comment}
\usepackage{color}
\usepackage[hidelinks]{hyperref}
\usepackage{authblk}

\newcommand{\R}{\mathbb R}
\newcommand{\K}{\mathcal K}

\newcommand{\ip}[2]{\langle #1,#2\rangle}
\newtheorem{theorem}{Theorem}[section]
\newtheorem{lemma}[theorem]{Lemma}
\allowdisplaybreaks[1]

\title{On the cyclic limiting behavior of the restarted 
	Arnoldi algorithm for real symmetric matrices with restart length $s=2$}

\author{
  Vance Faber\thanks{Hoquiam, WA, USA}
  \quad
  Jarek Liesen\thanks{University of Oxford, Department of Engineering Science, \tt{jarek.liesen@reuben.ox.ac.uk}}
  \quad
  J\"org Liesen\thanks{TU Berlin, Institute of Mathematics, {\tt liesen@math.tu-berlin.de}}
  \quad
  Petr Tich\'y\thanks{Charles University, Faculty of Mathematics and Physics, {\tt petr.tichy@mff.cuni.cz}}
}

\date{14 September 2026}

\begin{document}
\maketitle
	
\begin{abstract}
We prove that the restarted Arnoldi algorithm for real symmetric matrices with restart length $s=2$ has a cyclic limiting behavior. In the terminology established in~\cite{FLT}, this is a proof of the ACI($s$) conjecture for $A=A^T\in\R^{n\times n}$ and $s=2$. As explained in~\cite{FLT}, this also proves the Forsythe conjecture~\cite{For68} about the limiting behavior of the conjugate gradient method and positive definite $A=A^T\in\R^{n\times n}$ for $s=2$. The proof we give is an extensively rewritten and simplified version of a proof found using Codex with GPT-5.6 Sol. At the end of this paper, we discuss its origin and evolution, as well as related recent developments around the Forsythe conjecture.
Our motivation is to present a proof that is instructive and transparent, so that it can enhance the knowledge of interested human readers.
\end{abstract}

\section{Introduction}

Let $A=A^T\in\R^{n\times n}$ and an integer $s\geq 1$ be given. Suppose that $v_0\in\R^n$ with $\|v_0\|=1$, where $\|\cdot\|$ is the Euclidean norm, satisfies $d(A,v_0)\geq s+1$, where $d(A,v_0)$ denotes the grade of $v_0$ with respect to $A$. The following algorithm is considered in~\cite[Section~4.1]{FLT}:
\begin{align}
	& \mbox{For $k=0,1,2,\dots$}\nonumber \\
	&\;\;\; \widetilde{v}_{k+1}=P_s(A;v_k)v_k, \label{eqn:a1}\\
	&\;\;\; v_{k+1}=\widetilde{v}_{k+1}/\|\widetilde{v}_{k+1}\|. \label{eqn:a2}
\end{align}
In \eqref{eqn:a1}, $P_s(z;v_k)$ is a monic polynomial of degree $s$ that is defined by
\begin{equation}\label{eqn:arn-proj}
	P_s(A;v_k)v_k\perp\K_s(A,v_k),  
\end{equation}
where $\K_s(A,v_k)=\operatorname{span}\{v_k,Av_k,\dots,A^{s-1}v_k\}$ is the $s$th Krylov subspace of $A$ and $v_k$. Thus, the vector $\widetilde{v}_{k+1}$ in \eqref{eqn:a1} is defined by the \emph{Arnoldi projection} of $v_k$ with respect to $A$ and $s$. The algorithm \eqref{eqn:a1}--\eqref{eqn:a2} therefore is the \emph{restarted Arnoldi algorithm} with restart length $s$ and normalization of the iterates.

It is shown in~\cite[Theorem~1]{FLT} that $d(A,v_0)\geq s+1$ implies $d(A,v_k)\geq s+1$ for all $k\geq 0$ in \eqref{eqn:a1}--\eqref{eqn:a2}. Hence the Arnoldi projection in every step $k=0,1,2,\dots$ of the algorithm is well defined, which yields a sequence of uniquely determined monic polynomials $P_s(z;v_k)$. 

Moreover, it is shown in~\cite[Theorem~6]{FLT} that if $s=1$ and $d(A,v_0)\geq 2$, then the even-index sequence $\{v_{2k}\}$ in \eqref{eqn:a1}--\eqref{eqn:a2} converges to a single limit vector. The odd-index sequence $\{v_{2k+1}\}$ then also converges to a single limit vector, so that in the limit $k\rightarrow\infty$ the algorithm \eqref{eqn:a1}--\eqref{eqn:a2} cycles back and forth between two fixed limiting directions (that depend on $v_0$).

The main goal of the current paper is to prove that the same cyclic behavior of the restarted Arnoldi algorithm occurs for $s=2$, i.e., to prove the following theorem: 

\begin{theorem}\label{thm:main}
	If $A=A^T\in\R^{n\times n}$, $s=2$ and $d(A,v_0)\ge3$, then the even-index sequence $\{v_{2k}\}$ in \eqref{eqn:a1}--\eqref{eqn:a2} converges to a single limit vector. Hence also the odd-index sequence $\{v_{2k+1}\}$ converges to a single limit vector. 
\end{theorem}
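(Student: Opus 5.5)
We diagonalize $A=Q\Lambda Q^T$ and work in eigenvector coordinates. We write $v_k=\sum_j \gamma_j^{(k)} q_j$ and let $\omega_j^{(k)}=(\gamma_j^{(k)})^2$, so that $\omega^{(k)}$ is a probability vector on the distinct eigenvalues $\lambda_1<\dots<\lambda_m$. Here $m\ge 3$, and the support has size at least $3$ by the grade assumption. Since $P_2(z;v_k)$ is the degree-$2$ orthogonal polynomial with respect to the discrete measure $\mu_k=\sum_j\omega_j^{(k)}\delta_{\lambda_j}$, the iteration acts on measures by
\[
\omega_j^{(k+1)}=\frac{P_2(\lambda_j;v_k)^2\,\omega_j^{(k)}}{\sum_i P_2(\lambda_i;v_k)^2\omega_i^{(k)}} .
\]
Signs of $\gamma_j^{(k)}$ evolve by $\operatorname{sign}P_2(\lambda_j;v_k)$. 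Once the measures converge and the roots of $P_2$ stay away from the limiting support, these signs are eventually periodic with period $2$, so convergence of $v_{2k}$ reduces to convergence of $\omega^{(2k)}$. The odd-index statement then follows from continuity of $v\mapsto v_{+}$ on vectors of grade $\ge3$ (roots of $P_2$ depend continuously on the first four moments).

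The key tool I would aim for is a monotone quantity, following the $s=1$ argument of \cite[Theorem~6]{FLT}. There the quantity is the product of successive normalization constants, or equivalently an inequality $\|\widetilde v_{k+1}\|\le\|\widetilde v_{k+2}\|$. For general $s$ one has $\|\widetilde v_{k+1}\|=\min_{p\text{ monic},\deg p=s}\|p(A)v_k\|$. The identity $\|\widetilde v_{k+1}\|^2=\ip{P_2(A;v_k)v_k}{v_{k+1}}\,\|\widetilde v_{k+1}\|$ together with the Cauchy--Schwarz and minimality arguments gives
\[
\|\widetilde v_{k+1}\|^2=\ip{v_k}{P_2(A;v_k)^2v_k}=\|\widetilde v_{k+1}\|\ip{P_2(A;v_k)v_{k+1}}{v_k}\le \|\widetilde v_{k+1}\|\,\|P_2(A;v_k)v_{k+1}\|,
\]
and $\|P_2(A;v_k)v_{k+1}\|\ge\|\widetilde v_{k+2}\|$ by minimality. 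Hence $\|\widetilde v_{k+1}\|\le\|\widetilde v_{k+2}\|$, and the normalization constants form a bounded nondecreasing sequence (bounded by $\|A\|^2$-type constants). Equality analysis shows that the increments $\|\widetilde v_{k+2}\|-\|\widetilde v_{k+1}\|$ are summable and control $\|P_2(A;v_k)v_{k+1}-\widetilde v_{k+2}\|$ and the Cauchy--Schwarz defect $\|v_k-\pm v_{k+2}\|$ (with the sign fixed appropriately). In particular $v_{k+2}-v_k\to0$, up to the sign issue above, and $P_2(\cdot;v_{k+1})-P_2(\cdot;v_k)\to0$.

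Next I would identify the $\omega$-limit set $L$ of $\{\omega^{(2k)}\}$. It is compact, connected (since consecutive even iterates become close), and consists of measures $\mu$ for which the two-step map fixes $\mu$. In such a $2$-cycle, $P_2(\cdot;\mu)$ and $P_2(\cdot;\mu')$ coincide; call this common polynomial $p$. The fixed-point condition $p(\lambda_j)^4=c$ on the support forces $|p|$ to be constant on the support. A monic quadratic takes a given absolute value at at most $4$ points, and the orthogonality conditions then pin these down. So the limit measures are supported on at most $4$ eigenvalues, symmetric in the sense that $|p(\lambda_j)|=c^{1/4}$, and $p$ is determined by its two roots.

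The main obstacle is showing that $L$ is a single point rather than a continuum. The set of two-cycles can form continuous families: for example, with support size $3$ or $4$ the weights are not fully fixed by orthogonality. So monotonicity alone does not suffice. I would first try to show that along such a family the limit $p$, and hence its roots, is locally constant. Because the limiting normalization constant is a single number $c^{1/2}$ and $p$ is determined by which eigenvalues satisfy $|p|^2=c^{1/2}$, only finitely many candidate polynomials occur. Connectedness of $L$ then fixes one $p$ and one support set $S$. Then, near the limit, the weight ratios $\omega_i/\omega_j$ for $i,j\in S$ evolve by multiplication with $(p_k(\lambda_i)/p_k(\lambda_j))^2$, where $p_k=P_2(\cdot;v_k)$. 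I would try to prove that the deviations $p_k-p$ decay geometrically, or at least summably, using the contraction factor $\max_{j\notin S}|p(\lambda_j)|^2/c^{1/2}<1$ for the off-support weights and linearizing the orthogonality conditions. Summability of $\log$ of these ratios then yields convergence of $\omega^{(2k)}$, and with the sign argument, of $v_{2k}$. Establishing this summability is the step I expect to be genuinely hard, and the place where $s=2$ (an explicit quadratic and at most $4$ support points) must be exploited.
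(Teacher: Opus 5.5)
Your plan rests on a false structural claim. In a limiting $2$-cycle the two Arnoldi polynomials do \emph{not} coincide, and $P_2(\cdot;v_{k+1})-P_2(\cdot;v_k)$ does not tend to $0$ in general.

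For example, take eigenvalues $0,1,2$ with weights $\tfrac13$ each. Then $p=z^2-2z+\tfrac13$, $H=\tfrac29$, the new weights are $(\tfrac16,\tfrac23,\tfrac16)$, and $q=z^2-2z+\tfrac23\neq p$, while $p(\lambda)q(\lambda)=\tfrac29$ at all three eigenvalues. The correct limit relation is $p_*q_*=H$ on the support (fact (F3)), and $p_*q_*$ may be negative elsewhere. Moreover, on a three-point support every unit vector is a $2$-cycle, because $v\perp T_A(v)$ and $v\perp AT_A(v)$. So grade-$3$ cycles form continua along which $(p,q)$ varies. Your step from \emph{$|p|$ constant on the support} to \emph{finitely many candidate polynomials}, and hence to \emph{connectedness fixes $p$ and $S$}, therefore collapses.

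Your monotonicity chain is also invalid as written. It bounds both $\|\widetilde v_{k+1}\|$ and $\|\widetilde v_{k+2}\|$ from above by the same quantity $\|P_2(A;v_k)v_{k+1}\|$, which does not compare them. The correct step uses $v_k\perp\K_2(A,v_{k+1})$ to get $\sqrt{H_k}=\langle p_{k+1}(A)v_{k+1},v_k\rangle\le\sqrt{H_{k+1}}$.

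The missing idea is the quantitative estimate of Lemma~\ref{lem:polys}, $\|p_{k+2}-p_k\|\le C(H_{k+1}-H_k)$. It comes from $\langle p_{k+2}-p_k,f\rangle_{k+2}=-\langle p_kf,r_k^2\rangle_k/(H_kH_{k+1})$ with $r_k=q_k-H_k$ and $\|r_k\|_k^2=H_k(H_{k+1}-H_k)$. Being quadratic in $r_k$, it is summable, whereas $\|v_{k+2}-v_k\|\to0$ alone only gives $O(\sqrt{H_{k+1}-H_k})$. This estimate yields $p_{2k}\to p_*$ and $p_{2k+1}\to q_*$ (generally different), hence a single $Q_*$ and a single set $S$ for all limit vectors. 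After that, grade-$3$ limits are pinned down by a fixed nonsingular $3\times3$ system.

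Two further steps are not available in your framework. First, since $Q_*=p_*q_*$ can be negative, the exterior two-step multipliers a priori only satisfy $|Q_*(\mu_j)|\le H$. Strict contraction requires excluding $Q_*(\mu_j)=-H$. The paper does this with the potential $Y_{j,k}=\log\nu_{k,j}^2+\sum_i\beta_{i,j}\log\nu_{k,i}^2$ (Lagrange weights on $S$) together with the tail bound $\|q_k-Q_*\|\le C(H-H_k)$. Under your false identity $p=q$ this issue was invisible.

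Second, for $|S|=4$ you propose to prove summability of $p_k-p_*$ or of the log-ratio increments. These increments are of size $O(H-H_k)$, and nothing available guarantees $\sum_k(H-H_k)<\infty$. The paper instead obtains the missing fourth linear equation from convergence of $\rho_{2k}=v_{2k}^TAv_{2k}$. It uses the recurrence $\rho_{k+2}-\rho_k=-\kappa_k\alpha_k+E_k$, where $\kappa_k\ge0$ is a Gram-determinant quotient and $\sum_k|E_k|<\infty$. A dichotomy then shows that $\alpha_k=[z^3](q_k-Q_*)$ is either exponentially small or of eventually fixed sign.

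You leave this step open, and the earlier steps rely on the false identity $p=q$. So the proposal does not yet prove the theorem.
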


Let $A=A^T\in\R^{n\times n}$, $s=2$, and $v\in\R^n$ with unit Euclidean norm and $d(A,v)\geq 3$. Then the Arnoldi projection \eqref{eqn:arn-proj} defines a uniquely determined monic polynomial $P_2(z;v)$ of degree~2, and we write
$$T_A(v):=\frac{P_2(A;v)v}{\|P_2(A;v)v\|}.$$
As already mentioned above, \cite[Theorem~1]{FLT} shows that then $d(A, T_A(v))\geq 3$, and hence there exists a uniquely determined monic polynomial of degree 4 given by
\begin{equation}\label{eqn:Q4}
	Q_4(z;v):=P_2(z;T_A(v))P_2(z;v).
\end{equation}
For simplicity, we denote quantities arising in \eqref{eqn:a1}--\eqref{eqn:a2} by
\begin{equation}\label{eqn:pk-Hk}
	p_k(z):=P_2(z;v_k)\quad\mbox{and}\quad H_k:=\|\widetilde{v}_{k+1}\|^2,
\end{equation}
then 
\begin{equation}\label{eqn:TA-qk}
	v_{k+1}=T_A(v_k)=\frac{p_k(A)v_k}{\sqrt{H_k}}
    \quad\mbox{and}\quad
	Q_4(z;v_k)=p_{k+1}(z)p_k(z)=:q_k(z).
\end{equation}
We will use the following further facts from~\cite{FLT}:

\begin{enumerate}
	\item[(F1)] $H_k\uparrow H>0$ for $k\rightarrow\infty$~\cite[eqn. (17) with $H=\tau^2$]{FLT}.
	\item[(F2)] If $H_{k+1}=H_k$ at some step $k\geq 0$, then $v_{k+2}=v_k$ and both parity sequences remain constant from this step onwards~\cite[start of Section~4.1]{FLT}. 
	\item[(F3)] The set of limit vectors $\Sigma_A^*$ of the even-index sequence $\{v_{2k}\}$ is nonempty, closed, and connected.  
    Each $v_*\in \Sigma_A^*$ satisfies $T_A^2(v_*)=v_*$, $(Q_4(A;v_*)-HI_n)v_*=0$, and $3\leq d(A,v_*)\leq 4$~\cite[Theorems~3 and~5]{FLT}. The same holds for the set of limit vectors of the odd-index sequence $\{v_{{2k+1}}\}$.
\end{enumerate}

We will now describe the strategy of the proof of Theorem~\ref{thm:main} given in Sections~\ref{sec:polys}--\ref{sec:S4}. The overall approach is motivated by the discussion in~\cite[pp.~13--15]{FLT}. In the first step, we show that for any given initial vector $v_0$ with $d(A,v_0)\geq 3$, the polynomial sequences $\{p_{2k}\}$ and $\{p_{2k+1}\}$ converge to uniquely determined limit polynomials $p_*$ and $q_*$; see Section~\ref{sec:polys}. Showing that these sequences converge is also a major ingredient in the approach of Zhuk and Bondarenko to the original Forsythe conjecture for $s=2$~\cite{ZB}. As described in~\cite[pp.~14--15]{FLT}, the remaining part of their approach is incomplete.

The essential observation in Section~\ref{sec:active} is that convergence of the polynomial sequences implies that the ``active eigenvalues'' of every even- or odd-index limit vector belong to the same fixed set $S$, which contains either three or four distinct eigenvalues of $A$. We then use normalization and the orthogonality conditions to obtain a linear system for the squared coefficients of each even-index limit vector $v_*$ with respect to the corresponding eigenvectors. For each fixed three-element subset of $S$, this system has a fixed nonsingular $3\times 3$ matrix and a fixed right-hand side (independent of $v_*$), so the squared coefficients are uniquely determined. Since there are only finitely many such subsets and only finitely many sign choices, $\Sigma_A^*$ contains only finitely many vectors of grade $3$. If $|S|=3$, connectedness of $\Sigma_A^*$ therefore implies that it is a singleton, which completes the proof in this case; see Section~\ref{sec:S3}. The case $|S|=4$ is more challenging, since normalization and the orthogonality conditions for each $v_*$ with $d(A,v_*)=4$ only yield an underdetermined system with $3$ equations for the $4$ squared coefficients. The main work then is to obtain an additional linear equation, which we achieve by showing that the sequence of the Rayleigh quotients $\{v_{2k}^TAv_{2k}\}$ converges. The resulting system then has a fixed nonsingular $4\times 4$ matrix and a fixed right-hand side (both independent of $v_*$), which again shows that $\Sigma_A^*$ is a singleton; see Section~\ref{sec:S4}.

\section{Convergence of the polynomial sequences}\label{sec:polys}

Our goal in this section is to show that the polynomial sequences $\{p_{2k}\}$ and $\{p_{2k+1}\}$ converge to uniquely determined limit polynomials. All polynomials that occur below are real. Constants denoted by $C>0$ may change from one occurrence to the next, but are independent of $k$.

For two polynomials $f,g$, define
\[
 \ip{f}{g}_k:=(g(A)v_k)^T f(A)v_k.
\]
This is a positive semidefinite form on all polynomials and an inner
product on the polynomials of degree smaller than $d(A,v_k)$; cf., e.g.,~\cite[Corollary~3.4.2]{LieStr13}. We write
$\|f\|_k:=\ip{f}{f}_k^{1/2}$. Using $v_{k+1}=p_k(A)v_k/\sqrt{H_k}$ we obtain
\begin{align}
 \ip{f}{g}_{k+1}&=\frac{\ip{p_kf}{p_kg}_k}{H_k},\label{eqn:ip1}\\
 \ip{f}{g}_{k+2}&=\frac{\ip{q_kf}{q_kg}_k}{H_kH_{k+1}}.
 \label{eqn:ip2}
\end{align}

For polynomials of degree at most three, $f(z)=\sum_{j=0}^3\alpha_jz^j$, we consider the norm $\|f\|:=\|[\alpha_0,\ldots,\alpha_3]^T\|$. Let $A=UDU^T$ be an eigendecomposition of $A$ with $D={\rm diag}(\lambda_1,\dots,\lambda_n)$ and an orthogonal matrix $U\in\R^{n\times n}$. Then $\|Uv_k\|=1$ and the Cauchy-Schwarz inequality yields 
\begin{align*}
\|f\|_k^2 &=(U^Tv_k)^Tf(D)^2 (U^Tv_k)\leq \max_{1\leq i\leq n} |f(\lambda_i)|^2=\max_{1\leq i\leq n} \left|\sum_{j=0}^3 \alpha_j \lambda_i^j\right|^2 \\
&\leq \max_{1\leq i\leq n} \left(\sum_{j=0}^3 \lambda_i^{2j}\right)\cdot \left(\sum_{j=0}^3 \alpha_j^2\right)
=C^2 \|f\|^2,\quad \mbox{where}\quad C^2:=\max_{1\leq i\leq n} \left(\sum_{j=0}^3 \lambda_i^{2j}\right).
\end{align*}
Thus, we have the uniform upper bound $\|f\|_k\le C\|f\|$.

\begin{lemma}\label{lem:polys}
The polynomials $p_k$ in \eqref{eqn:pk-Hk} have uniformly bounded coefficients, and there is a constant $C>0$ such that
\begin{equation}\label{eq:variation}
 \|p_{k+2}-p_k\|\le C(H_{k+1}-H_k),\quad k\ge0.
\end{equation}
Both sequences $\{p_{2k}\}$ and $\{p_{2k+1}\}$ converge.
\end{lemma}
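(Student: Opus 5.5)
The plan is to view $p_k$ variationally. Because of \eqref{eqn:arn-proj}, $p_k$ is the monic quadratic that minimizes $\|\cdot\|_k$, and $H_k=\|p_k\|_k^2$. The difference $d_k:=p_{k+2}-p_k$ has degree at most one, so it can be controlled through the orthogonality conditions defining $p_{k+2}$, rewritten in the $k$-th inner product via \eqref{eqn:ip2}.

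\emph{Step 1: the basic identity.} Write $r_k:=q_k-H_k$, where $q_k=p_kp_{k+1}$. Since $p_{k+1}-p_k$ has degree at most one, $\ip{p_k}{p_{k+1}}_k=\ip{p_k}{p_k}_k=H_k$. Using $\|1\|_k=1$ and \eqref{eqn:ip1}, we get $\ip{q_k}{1}_k=H_k$ and $\|q_k\|_k^2=H_kH_{k+1}$. Expanding $\|q_k-H_k\|_k^2$ then gives
\[
\|r_k\|_k^2=H_k(H_{k+1}-H_k).
\]
By (F1) this is bounded by $C(H_{k+1}-H_k)$, and it tends to zero.

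\emph{Step 2: bounded coefficients and a uniform Gram bound.} Let $\mu_k:=v_k^TAv_k$. Taking the monic competitor $(z-\mu_k)(z-c)$ with any fixed $c$, the minimality of $p_k$ gives
\[
H_0\le H_k\le C\|(A-\mu_kI)v_k\|^2 .
\]
Hence the variance $\|(A-\mu_kI)v_k\|^2$ is bounded below uniformly. This variance equals the determinant of the Gram matrix $G_k$ of $\{1,z\}$ in $\ip{\cdot}{\cdot}_k$. The entries of $G_k$ are bounded, so $G_k^{-1}$ is uniformly bounded. The coefficients of $p_k(z)=z^2-\alpha z-\beta$ solve $G_k[\beta,\alpha]^T=[\ip{z^2}{1}_k,\ip{z^2}{z}_k]^T$, and the right-hand side is bounded. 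Therefore the coefficients of $p_k$ are uniformly bounded.

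\emph{Step 3: the key estimate \eqref{eq:variation}.} Let $\ell\in\{1,z\}$. By \eqref{eqn:ip2} and the orthogonality defining $p_{k+2}$,
\[
0=\ip{p_{k+2}}{q_k^2\ell}_k .
\]
Substitute $q_k^2=H_k^2+2H_kr_k+r_k^2$. The $r_k^2$ term is bounded by $C\|r_k\|_k^2$. Here I use $\|f(A)r_k(A)v_k\|\le\|f(A)\|\,\|r_k\|_k$ together with the bounded coefficients from Step 2.

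For the middle term, split $p_{k+2}\ell=p_k\ell+d_k\ell$. The contribution of $p_k\ell$ vanishes exactly, and this is the point to verify carefully:
\begin{itemize}
\item $\ip{p_k\ell}{p_kp_{k+1}}_k=H_k\ip{\ell}{p_{k+1}}_{k+1}=0$;
\item $\ip{p_k\ell}{H_k}_k=H_k\ip{p_k}{\ell}_k=0$.
\end{itemize}
The remaining contribution is bounded by $C\|d_k\|\,\|r_k\|_k$.

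In the leading term, $\ip{p_{k+2}}{\ell}_k=\ip{d_k}{\ell}_k$, because $p_k\perp_k\ell$. Hence $G_k$ applied to the coefficient vector of $d_k$ is
\[
O\bigl(\|r_k\|_k^2+\|d_k\|\,\|r_k\|_k\bigr).
\]
By Step 2 this yields
\[
\|d_k\|\le C(H_{k+1}-H_k)+C\|d_k\|\sqrt{H_{k+1}-H_k}.
\]

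\emph{Step 4: absorbing the error term.} Since $H_{k+1}-H_k\to0$, the last term can be absorbed for all $k$ with $C\sqrt{H_{k+1}-H_k}\le1/2$. For the remaining finitely many $k$, we have $H_{k+1}-H_k\ge\varepsilon>0$ and $\|d_k\|$ is bounded, so $\|d_k\|\le(B/\varepsilon)(H_{k+1}-H_k)$. If $H_{k+1}=H_k$, then (F2) gives $d_k=0$ directly. Enlarging $C$ covers all cases and proves \eqref{eq:variation}.

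\emph{Step 5: convergence.} Telescoping \eqref{eq:variation} over every other index gives
\[
\sum_k\|p_{2k+2}-p_{2k}\|\le C\,(H-H_0)<\infty,
\]
and the same holds for the odd indices. Hence both $\{p_{2k}\}$ and $\{p_{2k+1}\}$ are Cauchy sequences and converge.

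The main obstacle is Step 3. A naive perturbation argument only gives $\|d_k\|=O(\sqrt{H_{k+1}-H_k})$, which need not be summable. The linear rate depends on the exact cancellation of the first-order term $\ip{p_k\ell}{r_k}_k$ shown above.
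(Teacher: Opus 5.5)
Your proof is correct. It runs on the same ingredients as the paper's: the energy identity $\|r_k\|_k^2=H_k(H_{k+1}-H_k)$, the double orthogonality $\ip{p_k\ell}{r_k}_k=0$, the expansion $q_k^2=H_k^2+2H_kr_k+r_k^2$, and a uniform lower Gram bound. It differs from the paper in two places.

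First, your Step~2 gets the Gram bound variationally. Minimality of $p_k$ among monic quadratics and $H_k\ge H_0$ give $\det G_k=\|(A-\mu_kI)v_k\|^2\ge H_0/\|A-cI\|^2$, where your $\mu_k=v_k^TAv_k$. The paper instead argues by compactness, using the fact (F3) that every limit vector has grade at least three. Your version is self-contained and quantitative, and it needs only the monotonicity in (F1).

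Second, in Step~3 you transport the step-$(k+2)$ orthogonality of $p_{k+2}$ back to step $k$ and solve for $d_k$ with $G_k$. Because of this, $p_{k+2}\ell$ rather than $p_k\ell$ is paired with $r_k$. The cross term $2H_k\ip{d_k\ell}{r_k}_k=O(\|d_k\|\sqrt{H_{k+1}-H_k})$ then survives and forces the absorption and case split of Step~4. The paper instead writes $\ip{D_k}{f}_{k+2}=-\ip{p_k}{f}_{k+2}$ and works with the Gram matrix at step $k+2$. After transporting back with \eqref{eqn:ip2}, only $p_kf$ meets $q_k^2$, so the middle term vanishes identically. Choosing $f=D_k$ then gives $c\|D_k\|^2\le C\|D_k\|(H_{k+1}-H_k)$ directly, with no smallness of $H_{k+1}-H_k$ needed.

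Combining your Step~2 with the paper's Step~3 would give the shortest fully self-contained argument. Your appeal to (F2) in Step~4 is harmless but unnecessary: when $H_{k+1}=H_k$, the absorbed inequality already gives $d_k=0$.
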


\begin{proof}
Write $p_k(z)=z^2+b_kz+a_k$. The orthogonality conditions \eqref{eqn:arn-proj} for $p_k$ are
\begin{equation}\label{eq:orthogonality}
 \ip{p_k}{1}_k=\ip{p_k}{z}_k=0.
\end{equation}
They are equivalent to the linear system
\[
 G_k\begin{bmatrix}a_k\\b_k\end{bmatrix}
 =-\begin{bmatrix}\ip{z^2}{1}_k\\\ip{z^3}{1}_k\end{bmatrix},
 \quad
 G_k:=\begin{bmatrix}
 \ip{1}{1}_k&\ip{z}{1}_k\\
 \ip{z}{1}_k&\ip{z}{z}_k
 \end{bmatrix}.
\]
Here $G_k$ is the Gram matrix of the two vectors $v_k, Av_k$. Since every vector of the sequence \eqref{eqn:a1}--\eqref{eqn:a2} has grade at least three, $G_k$ is positive definite. Moreover, every limit vector also has grade at least three, and hence compactness gives a uniform bound $G_k\ge cI_2$ for some $c>0$. Also note that the right-hand side of the system above is uniformly bounded, so that the coefficients $a_k,b_k$ are uniformly bounded as well. For any polynomial $f(z)=bz+a$ we have 
$$\|f\|_k^2=\ip{f}{f}_k=b^2\ip{z}{z}_k+2ab\ip{z}{1}_k+a^2\ip{1}{1}_k=[a,b]G_k[a,b]^T.$$ 
Therefore,
\begin{equation}\label{eq:linear-lower-bound}
 c\|f\|^2\leq \|f\|_k^2,
\end{equation}
for every polynomial $f$ of degree at most one and every $k\geq 0$, which we will use below.

Since $p_{k+1}-p_k$ has degree at most one, orthogonality gives
\begin{equation}\label{eqn:qk-1}
 \ip{q_k}{1}_k=\ip{p_k}{p_{k+1}}_k
 =\ip{p_k}{p_k}_k=H_k.
\end{equation}
Equation~\eqref{eqn:ip2} with $f=g=1$ yields $\ip{q_k}{q_k}_k=H_kH_{k+1}$. Define $r_k:=q_k-H_k$. Then
\begin{equation}\label{eq:energy}
 \|r_k\|_k^2
 =\ip{q_k}{q_k}_k-2H_k\ip{q_k}{1}_k+H_k^2
 =H_k(H_{k+1}-H_k).
\end{equation}
For any polynomial $f$ of degree at most one, \eqref{eqn:ip1} and
orthogonality imply that
\[
 \ip{p_kf}{q_k}_k
 =\ip{p_kf}{p_kp_{k+1}}_k
 =H_k\ip{f}{p_{k+1}}_{k+1}=0,
\]
and hence
\begin{equation}\label{eq:double-orthogonality}
 \ip{p_kf}{r_k}_k
 =\ip{p_kf}{q_k}_k-H_k\ip{p_k}{f}_k=0.
\end{equation}
Define $D_k:=p_{k+2}-p_k$, which has degree at most one. We can assume that $D_k\neq 0$, because for $D_k=0$ the bound \eqref{eq:variation} is trivial. Orthogonality and \eqref{eqn:ip2} yield
\[
 \ip{D_k}{f}_{k+2}
 =-\ip{p_k}{f}_{k+2}
 =-\frac{\ip{p_kf}{q_k^2}_k}{H_kH_{k+1}}.
\]
Using $q_k^2=(r_k+H_k)^2=r_k^2+2r_kH_k+H_k^2$, as well as \eqref{eq:orthogonality} and
\eqref{eq:double-orthogonality} shows that
\begin{equation}\label{eq:factor-difference}
 \ip{D_k}{f}_{k+2}
 =-\frac{\ip{p_kf}{r_k^2}_k}{H_kH_{k+1}}.
\end{equation}
Considering the term in the numerator on the right-hand side with $f=D_k$, an evaluation on the fixed spectrum of $A$ gives
\[
 |\ip{p_kD_k}{r_k^2}_k|
 \le\max_{\lambda\in\operatorname{spec}(A)}
             |p_k(\lambda)D_k(\lambda)|\,\|r_k\|_k^2
 \le C\|D_k\|\|r_k\|_k^2,
\]
where the last inequality uses the uniform coefficient bound for $p_k$. Combining  \eqref{eq:linear-lower-bound}, \eqref{eq:energy}, and \eqref{eq:factor-difference}
we obtain
\[
 c\|D_k\|^2
 \le\|D_k\|_{k+2}^2
 =\frac{|\ip{p_kD_k}{r_k^2}_k|}{H_kH_{k+1}}
 \le C\|D_k\|(H_{k+1}-H_k).
\]
Here $H_{k+1}\ge H_0>0$ has been absorbed into the constant. Division by $\|D_k\|$ now proves \eqref{eq:variation}. Summing \eqref{eq:variation} gives
\[
\sum_{k=0}^{\infty}\|p_{k+2}-p_k\|
\leq C\sum_{k=0}^{\infty}(H_{k+1}-H_k)
= C(H-H_0)<\infty.
\]
Thus, for each $r\in\{0,1\}$, the series
\[
p_r+\sum_{j=0}^{\infty}
\bigl(p_{2j+r+2}-p_{2j+r}\bigr)
\]
converges absolutely in the coefficient norm. Its partial sums are
$p_{2m+r}$, so both the even- and the odd-index polynomial sequences
converge.
\end{proof}

Lemma~\ref{lem:polys} shows that
\begin{equation}\label{eq:polynomial-limits}
 p_{2k}\rightarrow p_*\quad\mbox{and}\quad p_{2k+1}\rightarrow q_*
\end{equation}
for uniquely determined monic quadratic polynomials $p_*,q_*$.
Since $q_k=p_{k+1}p_k$, we also have
\begin{equation}\label{eq:product-limit}
 q_k\rightarrow p_*q_*=:Q_*.
\end{equation}
Moreover,
\[
 q_{k+1}-q_k=p_{k+1}(p_{k+2}-p_k).
\]
The uniform coefficient bound and \eqref{eq:variation} imply
$\|q_{k+1}-q_k\|\le C(H_{k+1}-H_k)$. Summation from $k$ to $\infty$
yields the estimate
\begin{equation}\label{eq:tail}
 \|q_k-Q_*\|\le C(H-H_k).
\end{equation}
(Note that $q_k-Q_*$ has degree at most $3$.)

Let $v_*\in\Sigma_A^*$ and choose an even-index subsequence $v_{2k_j}\rightarrow v_*$. Passing to the limit and using (16) and uniqueness of the Arnoldi projection gives $P_2(z;v_*)=p_*(z)$, and hence \eqref{eqn:TA-qk} yields 
$$v_{2k_j+1}=\frac{p_{2k_j}(A)v_{2k_j}}{\sqrt{H_{2k_j}}}\rightarrow \frac{p_*(A)v_*}{\sqrt{H}}=T_A(v_*).$$
Applying the same argument to any subsequence of odd indices yields $P_2(z; T_A(v_*))=q_*(z)$, and hence 
$$Q_4(z; v_*) = p_*(z)q_*(z) = Q_*(z).$$
For a limit vector of the sequence $\{v_{2k+1}\}$ the two factors are interchanged, and their product is again $Q_*$. Thus the polynomial in fact (F3) is the same for every even- or odd-index limit vector. In particular, (F3) gives
\begin{equation}\label{eq:limit-equation}
 (Q_*(A)-HI_n)v_*=0,
\end{equation}
for every limit vector.

\section{Active and inactive eigenvalues}\label{sec:active}

If $H_{k+1}=H_k$ at any step $k$, then the convergence of the sequences follows from fact (F2). We may therefore assume from now on that \begin{equation}\label{eq:strict-height}
 H_k<H\qquad\text{for every }k\geq 0.
\end{equation}
Suppose that $d(A,v_0)=m\ge3$. Then
\[
 v_0=\sum_{j=1}^m\nu_{0,j}u_j,\quad
 \sum_{j=1}^m\nu_{0,j}^2=1,\quad \nu_{0,j}\ne0, \quad j=1,\dots,m,
\]
where $u_1,\dots,u_m$ are mutually orthonormal eigenvectors corresponding to distinct eigenvalues $\mu_1,\dots,\mu_m$ of $A$. We call these the eigenvalues \emph{active in $v_0$}. All other eigenvalues of $A$ are called \emph{inactive in $v_0$}. Every vector $v_k$ in \eqref{eqn:a1}--\eqref{eqn:a2} is given by some polynomial in $A$ times $v_0$, and hence is of the form
\begin{equation}\label{eq:spectral-coordinates}
 v_k=\sum_{j=1}^m\nu_{k,j}u_j,\quad
 \sum_{j=1}^m\nu_{k,j}^2=1.
\end{equation}
If $\nu_{k,j}\neq 0$, then the eigenvalue $\mu_j$ is called \emph{active in $v_k$}. All other eigenvalues of $A$ are called \emph{inactive in $v_k$}. 

In terms of the coefficients, the equation $v_{k+1}=p_k(A)v_k/\sqrt{H_k}$ for $k\geq 0$ translates into
\[
 \nu_{k+1,j}=\frac{p_k(\mu_j)}{\sqrt{H_k}}\nu_{k,j},\quad j=1,\dots,m.
\]
Thus, no eigenvalue that is inactive in $v_k$ can become active in $v_{k+1}$, and any eigenvalue that is active in $v_0$ can become inactive at most once during the iteration. Also note that every $v_k$ has at least three active eigenvalues, since $d(A,v_k)\ge3$.

When we study the limiting behavior of \eqref{eqn:a1}--\eqref{eqn:a2} we can discard a finite initial segment ending at some index $k$ after the last ``deactivation'' of a previously active eigenvalue. After relabeling the remaining
eigenvalues and eigenvectors, we keep the notation $m,\mu_j,u_j,\nu_{k,j}$ for this restricted iteration. We may then assume that every remaining coefficient $\nu_{k,j}$ is nonzero at every finite step under consideration.

Every (even- or odd-index) limit vector $v_*=\sum_{j=1}^m \nu_{*,j}u_j$ satisfies \eqref{eq:limit-equation}, and hence
$$(Q_*(\mu_j)-H)\nu_{*,j}=0,\quad j=1,\dots,m.$$
For every $\nu_{*,j}\neq 0$ we require $Q_*(\mu_j)-H=0$. The polynomial $Q_*(z)-H$ is monic of degree $4$, hence has at most $4$ distinct roots, and does not depend on $v_*$. Since $3\leq d(A,v_*)\leq 4$ holds for every $v_*\in\Sigma_A^*$, we have the following result that is essential for the rest of the proof.

\begin{lemma}
The active eigenvalues in every (even- or odd-index) limit vector are contained in the same fixed set
$$S:=\{\mu_j:Q_*(\mu_j)=H\},\quad\mbox{where}\quad 3\leq |S|\leq 4.$$
\end{lemma}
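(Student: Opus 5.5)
The argument has two parts: the containment, which follows directly from \eqref{eq:limit-equation}, and the cardinality bounds, which come from the grade bounds in fact (F3).

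\emph{Containment.} Let $v_*$ be any even- or odd-index limit vector. By the discussion after \eqref{eq:tail}, $Q_4(z;v_*)=Q_*(z)$, and by (F3) we have $(Q_*(A)-HI_n)v_*=0$, i.e., \eqref{eq:limit-equation}. Since $v_*$ is a limit of vectors of the form \eqref{eq:spectral-coordinates} and the span of $u_1,\dots,u_m$ is closed, we can write $v_*=\sum_{j=1}^m\nu_{*,j}u_j$. Applying \eqref{eq:limit-equation} and using that $u_1,\dots,u_m$ are orthonormal gives $(Q_*(\mu_j)-H)\nu_{*,j}=0$ for each $j$. Hence every eigenvalue $\mu_j$ that is active in $v_*$ satisfies $Q_*(\mu_j)=H$, so it lies in $S$. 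The key point is that $S$ does not depend on $v_*$: $Q_*=p_*q_*$ is fixed by Lemma~\ref{lem:polys}, and $H$ is fixed by (F1).

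\emph{Upper bound $|S|\le4$.} The polynomial $Q_*(z)-H$ is monic of degree four, so it is not identically zero and has at most four roots. Since the $\mu_j$ are distinct, $S$ contains at most four elements.

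\emph{Lower bound $|S|\ge3$.} By (F3), $\Sigma_A^*$ is nonempty, so there is some limit vector $v_*$. Its grade equals the number of active eigenvalues in it, because $A$ is diagonalizable and the $\mu_j$ are distinct. Fact (F3) gives $d(A,v_*)\ge3$, so $v_*$ has at least three active eigenvalues. By the containment step these all lie in $S$, hence $|S|\ge3$.

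No step here is a real obstacle. The substantive input is the earlier identification $Q_4(z;v_*)=Q_*(z)$ for every limit vector, which is what makes $S$ uniform across all of $\Sigma_A^*$ and across both parities.
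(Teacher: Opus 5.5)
Your proposal is correct and follows essentially the same route as the paper. The limit equation $(Q_*(A)-HI_n)v_*=0$ forces $Q_*(\mu_j)=H$ for every active $\mu_j$, the monic quartic $Q_*(z)-H$ gives $|S|\le 4$, and $d(A,v_*)\ge 3$ from (F3) gives $|S|\ge 3$. You additionally spell out details the paper leaves implicit: that $v_*$ lies in the span of the $u_j$, that the grade equals the number of active eigenvalues, and that nonemptiness of $\Sigma_A^*$ is needed for the lower bound.
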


\section{Limit vectors with grade $3$}\label{sec:S3}

Possibly after relabeling the eigenvectors, any $v_*\in\Sigma_A^*$ with $d(A,v_*)=3$ can be written as  $v_*=w_1u_1+w_2u_2+w_3u_3$ , where the corresponding active eigenvalues are $\{\lambda_1,\lambda_2,\lambda_3\}\subseteq S$. Normalization and the orthogonality conditions (cf. \eqref{eq:orthogonality} for $v_*$) give
\[
 \sum_{i=1}^3w_i^2=1,\qquad
 \sum_{i=1}^3w_i^2p_*(\lambda_i)=0,\qquad
 \sum_{i=1}^3w_i^2\lambda_i p_*(\lambda_i)=0.
\]
Since $Q_*(\lambda_i)=p_*(\lambda_i)q_*(\lambda_i)=H>0$, these equations are
equivalent to
\[
 \begin{bmatrix}
 q_*(\lambda_1)/H&q_*(\lambda_2)/H&q_*(\lambda_3)/H\\
 1&1&1\\
 \lambda_1&\lambda_2&\lambda_3
 \end{bmatrix}
 \begin{bmatrix}
 p_*(\lambda_1)&0&0\\
 0&p_*(\lambda_2)&0\\
 0&0&p_*(\lambda_3)
 \end{bmatrix}
 \begin{bmatrix}w_1^2\\w_2^2\\w_3^2\end{bmatrix}
 =\begin{bmatrix}1\\0\\0\end{bmatrix}.
\]
Considering the first matrix, we note that the polynomials $1,z,q_*(z)/H$ form a basis of the vector space of the (real) polynomials of degree at most two, and evaluation at three distinct points yields a nonsingular matrix. The second (diagonal) matrix is obviously nonsingular. Thus, the three squared coefficients $w_i^2$ are uniquely determined. Only finitely many sign choices for the coefficients $w_i$ of $v_*$ remain, and there are only finitely many subsets of $S$ with three elements. Hence $\Sigma_A^*$ contains only finitely many vectors $v_*$ with $d(A,v_*)=3$. 

If $|S|=3$, then all vectors in $\Sigma_A^*$ have grade three, and thus $\Sigma_A^*$ is a finite set. Since $\Sigma_A^*$ is also closed and connected (see fact (F3)), it must be a singleton, i.e., the sequence $\{v_{2k}\}$ converges. The convergence of $\{v_{2k+1}\}$ immediately follows from \eqref{eq:polynomial-limits} and
$v_{2k+1}=p_{2k}(A)v_{2k}/\sqrt{H_{2k}}$.

\section{Limit vectors with grade $4$}\label{sec:S4}

It remains to consider $|S|=4$, i.e.,
$$S=\{\lambda_1<\lambda_2<\lambda_3<\lambda_4\}.$$
The definition of $S$ gives the four interpolation conditions $Q_*(\lambda_i)=H$, $i=1,\dots,4$. Since $Q_*(z)-H$ is monic of degree 4 and vanishes at the four points
of $S$, we have
\begin{equation}\label{eq:four-factor}
 Q_*(z)=p_*(z)q_*(z)=\Pi(z)+H,\quad\mbox{where}\quad \Pi(z):=\prod_{i=1}^4(z-\lambda_i).
\end{equation}
Note that this is the interpolation argument following~\cite[Theorem~5]{FLT}, applied to the common polynomial $Q_*$. If $|\Sigma_A^*|=1$, then the proof is complete. Otherwise, the connectedness of $\Sigma_A^*$ and the fact that this set contains only finitely many vectors of grade~$3$ show that $\Sigma_A^*$ contains a vector of grade~$4$. We assume this in the following.

If $v_*=\sum_{i=1}^4 w_iu_i\in\Sigma_A^*$ has $d(A,v_*)=4$, then similarly to the above, normalization and orthogonality conditions give the linear algebraic system
\[
 \begin{bmatrix}
 1&1&1&1\\
 p_*(\lambda_1)&p_*(\lambda_2)&p_*(\lambda_3)&p_*(\lambda_4)\\
 \lambda_1p_*(\lambda_1)&\lambda_2p_*(\lambda_2)&
 \lambda_3p_*(\lambda_3)&\lambda_4p_*(\lambda_4)
 \end{bmatrix}
 \begin{bmatrix}w_1^2\\w_2^2\\w_3^2\\w_4^2\end{bmatrix}
 =\begin{bmatrix}1\\0\\0\end{bmatrix}.
\]
Unlike in the case $d(A,v_*)=3$, however, the squared coefficients $w_i^2$ are not uniquely determined through this system. The main goal in the following is to obtain one further linear equation. This will be done by considering the sequence of the Rayleigh quotients
\begin{equation}\label{eqn:RQ}
\rho_k:=v_k^TA v_k=\ip{z}{1}_k,
\end{equation}
and showing that $\rho_{2k}\rightarrow \rho_*$ (independent of $v_*$). The additional equation is then given by
$$v_*^TAv_*=\sum_{i=1}^4 w_i^2 \lambda_i=\rho_*,$$
which yields a nonsingular system determining the $w_i^2$ uniquely. 

For the eigenvalues that are active in $v_0$ but \emph{not} contained in $S$, we retain the notation $\mu_j$ for clarity of the presentation. We call these $\mu_j$ the \emph{exterior eigenvalues} (with respect to $S$), and the coefficients of any $v_k$ corresponding to these eigenvalues are called the \emph{exterior coefficients of $v_k$}. Since the active eigenvalues of every (even- or odd-index) limit vector are contained in the set $S$, the exterior coefficients of the vectors $v_k$ must converge to zero. The following result shows how the decay of these exterior coefficients can be controlled.    

\begin{lemma}\label{lem:exterior-decay}
Assume that $|S|=4$ and that $\Sigma_A^*$ contains a vector of grade 4. Then there are constants $C>0$ and $0<\vartheta<1$ such that, for all sufficiently large $k$,
\begin{equation}\label{eq:exterior-decay}
 \varepsilon_k:=\sum_{\mu_j\notin S}\nu_{k,j}^2
 \le C\vartheta^k.
\end{equation}
\end{lemma}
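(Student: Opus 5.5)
The plan is to follow the exterior coefficients over two steps. From $\nu_{k+1,j}=p_k(\mu_j)\nu_{k,j}/\sqrt{H_k}$ applied twice we get
\[
 \nu_{k+2,j}^2=\frac{q_k(\mu_j)^2}{H_kH_{k+1}}\,\nu_{k,j}^2 .
\]
By \eqref{eq:product-limit} and (F1), the factor $q_k(\mu_j)^2/(H_kH_{k+1})$ converges to $Q_*(\mu_j)^2/H^2$. It therefore suffices to show that
\[
 |Q_*(\mu_j)|<H\quad\mbox{for every exterior eigenvalue } \mu_j.
\]
If this holds, fix $\vartheta_0<1$ with $\max_{\mu_j\notin S}Q_*(\mu_j)^2/H^2<\vartheta_0$ (there are only finitely many exterior eigenvalues). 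For $k\ge k_0$ every two-step factor is then at most $\vartheta_0$. Iterating separately on even and odd indices and summing over the finitely many $j$ gives $\varepsilon_k\le C\vartheta^k$ with $\vartheta=\vartheta_0^{1/2}$.

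To prove $|Q_*(\mu_j)|<H$ I would rule out the three alternatives.

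\emph{Case $Q_*(\mu_j)=H$.} Then $\mu_j\in S$ by definition of $S$, which is excluded.

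\emph{Case $|Q_*(\mu_j)|>H$.} The two-step factor eventually exceeds some constant greater than $1$. Since $\nu_{k,j}\neq0$ at every step after the relabeling of Section~\ref{sec:active}, $\nu_{k,j}^2$ would grow without bound. This contradicts $\sum_j\nu_{k,j}^2=1$.

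\emph{Case $Q_*(\mu_j)=-H$.} By \eqref{eq:four-factor} this means $\Pi(\mu_j)=-2H$, which is possible only for $\mu_j\in(\lambda_1,\lambda_2)\cup(\lambda_3,\lambda_4)$. This is the only place where the hypothesis that $\Sigma_A^*$ contains a grade-4 vector $v_*=\sum_{i=1}^4w_iu_i$ enters, and I expect it to be the main obstacle.

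First approach for the case $Q_*(\mu_j)=-H$: a root-location argument. The conditions $\ip{p_*}{1}=\ip{p_*}{z}=0$ with respect to the discrete measure $\sum w_i^2\delta_{\lambda_i}$ (all four weights positive) mean that $p_*$ is the degree-2 orthogonal polynomial of that measure. Hence $p_*$ has two real roots strictly inside $(\lambda_1,\lambda_4)$, separated by at least one $\lambda_i$. Because $T_A(v_*)$ also has grade 4 with positive weights $w_i^2p_*(\lambda_i)^2/H$, the same holds for $q_*$. In addition, $p_*(\lambda_i)q_*(\lambda_i)=H>0$ forces $p_*$ and $q_*$ to have equal signs at each $\lambda_i$. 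From this sign pattern I would determine which intervals contain the roots of each factor, and then bound the negative part of $p_*q_*$ on $(\lambda_1,\lambda_2)$ and $(\lambda_3,\lambda_4)$. The target is $\min_{[\lambda_1,\lambda_4]}Q_*>-H$, for instance by showing $Q_*+H$ is positive there through its factorization, or via an extremal characterization of $H$.

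Fallback for the same case: if the bound $Q_*>-H$ cannot be obtained directly, argue dynamically. Use the tail estimate \eqref{eq:tail} together with the energy identity \eqref{eq:energy}. If $Q_*(\mu_j)=-H$, the decay of $\nu_{k,j}$ would be only subexponential. I would then try to show that this is incompatible with the monotone convergence $H_k\uparrow H$ near a grade-4 limit point, for example by showing that such a mode would make $H_{k+1}-H_k$ decay too slowly relative to $\varepsilon_k$.
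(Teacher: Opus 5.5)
Your reduction to $|Q_*(\mu_j)|<H$, your treatment of the cases $Q_*(\mu_j)=H$ and $|Q_*(\mu_j)|>H$, and your final two-parity iteration all agree with the paper. The gap is the case $Q_*(\mu_j)=-H$. You correctly single it out as the crux, but you do not resolve it, and your first approach cannot resolve it. The target $\min_{[\lambda_1,\lambda_4]}Q_*>-H$ is false for genuine grade-$4$ two-cycles that have every property you list.

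Here is a counterexample. Take $S=\{-3,-1,1,3\}$, $H=4$, and $v_*$ with squared coefficients $\frac1{16}(4-2\sqrt3,\,4+2\sqrt3,\,4+2\sqrt3,\,4-2\sqrt3)$.
\begin{itemize}
\item The Arnoldi polynomial of $v_*$ is $p_*(z)=z^2-5+2\sqrt3$, with $\|p_*(A)v_*\|^2=4$.
\item $T_A(v_*)$ has squared coefficients $\frac1{16}(4+2\sqrt3,\,4-2\sqrt3,\,4-2\sqrt3,\,4+2\sqrt3)$ and Arnoldi polynomial $q_*(z)=z^2-5-2\sqrt3$, again with height $4$.
\item Both factors have sign pattern $(+,-,-,+)$ on $S$, and their roots are separated by nodes.
\item $p_*q_*=(z^2-5)^2-12=\Pi+4$.
\end{itemize}
Nevertheless $Q_*(\sqrt5)=-12<-H$, and $Q_*(\mu)=-H$ at $\mu=\sqrt{5\pm2\sqrt2}\in(1,3)$.

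This failure is unavoidable in principle. Since $v_*$ has zero coefficient at $\mu_j$, the limit data $(v_*,p_*,q_*,H)$ cannot see $\mu_j$. Excluding $Q_*(\mu_j)=-H$ must therefore use the way $v_k$ approaches $v_*$.

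Your fallback goes in that direction but lacks the key mechanism. Write $h_k=H-H_k$ and $e_k=q_k-Q_*$. The two-step factor is then
\[
\frac{\nu_{k+2,j}^2}{\nu_{k,j}^2}=1+\frac{h_k+h_{k+1}-2e_k(\mu_j)}{H}+O(h_k^2).
\]
By \eqref{eq:tail}, $e_k(\mu_j)$ is only known to be $O(h_k)$. It is thus of the same order as the positive term, and of unknown sign. So neither subexponential decay of $\nu_{k,j}$ nor a comparison of $H_{k+1}-H_k$ with $\varepsilon_k$ yields a contradiction by itself. Indeed, $H_{k+1}-H_k\ge c\,\nu_{k,j}^2$ simply holds in this case.

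The paper removes $e_k(\mu_j)$ exactly:
\begin{itemize}
\item Since $e_k$ has degree at most $3$, $e_k(\mu_j)=\sum_i\beta_{i,j}e_k(\lambda_i)$, where $\beta_{i,j}=L_i(\mu_j)$ are the Lagrange weights and $\sum_i\beta_{i,j}=1$.
\item The values $e_k(\lambda_i)$ enter the two-step factors of the coefficients on $S$ with the opposite sign.
\item Hence $Y_{j,k}=\log\nu_{k,j}^2+\sum_i\beta_{i,j}\log\nu_{k,i}^2$ satisfies $\tfrac12(Y_{j,k+2}-Y_{j,k})=(h_k+h_{k+1})/H+O(h_k^2)>0$. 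This uses $h_k>0$ from \eqref{eq:strict-height}.
\item So $Y_{j,k}$ is eventually nondecreasing on each parity.
\item Along an even subsequence converging to the grade-$4$ limit vector, $\nu_{k,i}^2\to w_i^2>0$ and $\nu_{k,j}^2\to0$, which force $Y_{j,k}\to-\infty$. This is a contradiction.
\end{itemize}
This Lagrange-weighted potential is exactly where the grade-$4$ hypothesis is used, and it is the idea missing from your proposal.
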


\begin{proof}
If $v_k$ has no exterior eigenvalues, then $\varepsilon_k=0$ and the assertion is immediate. Otherwise, suppose that $v_k$ is as in \eqref{eq:spectral-coordinates} with $\nu_{k,j}\neq 0$ for some exterior eigenvalue $\mu_j\notin S$. By construction,
\begin{equation}\label{eq:component}
 \nu_{k+2,j}
 =\frac{q_k(\mu_j)}{\sqrt{H_kH_{k+1}}}\nu_{k,j}.
\end{equation}
We know from fact (F1) that $\sqrt{H_k H_{k+1}}\uparrow H$, and from \eqref{eq:product-limit} that $q_k(\mu_j)\rightarrow Q_*(\mu_j)$. If $|Q_*(\mu_j)|>H$, then the absolute value of the multiplier in \eqref{eq:component} will eventually (for sufficiently large $k$) become larger than $1$, contradicting that the exterior coefficients converge to zero. Thus, we must have $|Q_*(\mu_j)|\leq H$. The equality $Q_*(\mu_j)=H$ is excluded by the definition of $S$ (since $\mu_j$ is an exterior eigenvalue). We will now show that also $Q_*(\mu_j)\neq -H$, which implies $|Q_*(\mu_j)|<H$.  

Using the polynomial $\Pi$ as in \eqref{eq:four-factor} we consider the standard Lagrange basis polynomials
$$L_i(z):=\frac{\Pi(z)}{(z-\lambda_i)\Pi'(\lambda_i)},\quad i=1,\dots ,4,$$
which give $g(z)=\sum_{i=1}^4 g(\lambda_i)L_i(z)$ for every polynomial $g$ of degree at most $3$. Thus, 
\begin{equation}\label{eq:lagrange-exterior}
g(\mu_j)=\sum_{i=1}^4 g(\lambda_i) \beta_{i,j},\quad 
\beta_{i,j}=\frac{\Pi(\mu_j)}{(\mu_j-\lambda_i)\Pi'(\lambda_i)},\;\; i=1,\dots,4,\quad \sum_{i=1}^4\beta_{i,j}=1,
\end{equation}
where the last equality follows from taking $g(z)=1$.

As described above, for sufficiently large $k$ all coefficients of $v_k$ in its expansion \eqref{eq:spectral-coordinates} are assumed nonzero, since we can consider the iteration after the last ``deactivation'' of an active eigenvalue. For any exterior coefficient $\nu_{k,j}$ we may therefore define
$$Y_{j,k}:=\log\nu_{k,j}^2
       +\sum_{i=1}^4\beta_{i,j}\log\nu_{k,i}^2,$$
where $\nu_{k,i}$, $i=1,\dots,4$, are the coefficients corresponding to the active eigenvalues in $S$. Taking logarithms of the squared recurrence \eqref{eq:component} gives
$$\frac{\log\nu_{k+2,j}^2-\log\nu_{k,j}^2}{2}
=\log|q_k(\mu_j)|-\frac12(\log H_k+\log H_{k+1}).$$
The same relation holds for each coefficient corresponding to $\lambda_i$.
Substitution into the definition of $Y_{j,k}$ therefore yields
\begin{align*}
\frac{Y_{j,k+2}-Y_{j,k}}2
={}&\log|q_k(\mu_j)|
+\sum_{i=1}^4\beta_{i,j}\log|q_k(\lambda_i)|
-\frac{1+\sum_{i=1}^4\beta_{i,j}}2
 (\log H_k+\log H_{k+1}).
\end{align*}
Using $\sum_{i=1}^4\beta_{i,j}=1$ from \eqref{eq:lagrange-exterior}, and
$q_k(\lambda_i)>0$ for all sufficiently large $k$ (which holds since $q_k(\lambda_i)\to H>0$), we obtain
\begin{equation}\label{eq:log-difference}
\frac{Y_{j,k+2}-Y_{j,k}}2
=\log|q_k(\mu_j)|+\sum_{i=1}^4\beta_{i,j}\log q_k(\lambda_i)
-\log H_k-\log H_{k+1}.
\end{equation}
Write $h_k:=H-H_k$ and $e_k:=q_k-Q_*$, which has degree at most $3$. Then, by \eqref{eq:tail}, $\|e_k\|=O(h_k)$. Recall that we want to show $Q_*(\mu_j)\neq -H$. Assume the contrary, i.e., that $Q_*(\mu_j)=-H$. Then $q_k(\mu_j)=-H+e_k(\mu_j)$, and a Taylor expansion at $H>0$ gives
\begin{align*}
 \log|q_k(\mu_j)|
  &=\log H-\frac{e_k(\mu_j)}H+O(h_k^2),\\
 \log q_k(\lambda_i)
  &=\log H+\frac{e_k(\lambda_i)}H+O(h_k^2),\\
 -\log H_k-\log H_{k+1}
  &=-2\log H+\frac{h_k+h_{k+1}}H+O(h_k^2).
\end{align*}
Thus, in \eqref{eq:log-difference} the constant terms and the terms involving $e_k$ cancel, so that
\begin{equation}\label{eq:log-increase}
 \frac{Y_{j,k+2}-Y_{j,k}}2
 =\frac{h_k+h_{k+1}}H+O(h_k^2)>0
\end{equation}
for all sufficiently large $k$. Here $0<h_{k+1}\le h_k\to0$ by \eqref{eq:strict-height}, so the positive term is at least $h_k/H$ and dominates the remainder. Along any even subsequence converging to a limit vector with grade $4$, the weights $\nu_{k,i}^2$, $i=1,\dots,4$, tend to positive values, whereas $\nu_{k,j}^2\to0$. Thus $Y_{j,k}\to-\infty$ along this subsequence, which contradicts \eqref{eq:log-increase} and shows that $Q_*(\mu_j)\neq -H$.

Consequently, every remaining exterior eigenvalue $\mu_j$ satisfies $|Q_*(\mu_j)|<H$. Choose a constant $\vartheta$ such that
\[
\max_{\mu_j\notin S}\frac{|Q_*(\mu_j)|}{H}<\vartheta<1.
\]
Since the two-step multipliers in \eqref{eq:component} converge to $Q_*(\mu_j)/H$, and there are only finitely many exterior eigenvalues, there is an index $K$
such that
$$\frac{|q_k(\mu_j)|}{\sqrt{H_kH_{k+1}}}\le\vartheta,\quad\mbox{for every}\; 
k\ge K\;\mbox{and}\; \mu_j\notin S.$$
Squaring \eqref{eq:component} and summing over the exterior eigenvalues gives $\varepsilon_{k+2}\le\vartheta^2\varepsilon_k$ for $k\ge K$. Iterating separately on the two parities yields
\[
\varepsilon_{K+r+2\ell}\le\vartheta^{2\ell}\varepsilon_{K+r}
\quad\mbox{for}\; r\in\{0,1\}\;\mbox{and}\; \ell\ge0.
\]
Hence, choosing a positive constant $C$ no smaller than
\[
\max_{r\in\{0,1\}}\left\{\vartheta^{-(K+r)}\varepsilon_{K+r}\right\},
\]
we obtain $\varepsilon_k\le C\vartheta^k$ for every $k\ge K$, which proves \eqref{eq:exterior-decay}.
\end{proof}

Define $\alpha_k:=[z^3](q_k-Q_*)$, the coefficient of $z^3$ in $q_k-Q_*$. By (17), $\alpha_k\to 0$. Since $Q_*=\Pi+H$ and $r_k=q_k-H_k$, we also have
\[
\alpha_k=[z^3](r_k-\Pi).
\]
To prove convergence of the even-index Rayleigh quotients, we shall establish a recurrence of the form
\[
\rho_{k+2}-\rho_k=-\kappa_k\alpha_k+E_k,
\quad 0\le\kappa_k\le C,\quad\sum_{k=0}^{\infty}|E_k|<\infty.
\]
Thus, we need to control the behavior of $\alpha_k$ beyond its convergence to zero. Lemma~\ref{lem1} bounds its successive changes. Together with $\alpha_k\to0$, this estimate will imply that either the sequence $\{\alpha_k\}$ decays exponentially, or it has a fixed sign eventually. Lemma~\ref{lem2} establishes
the recurrence above. In the exponential-decay case, the two-step Rayleigh quotient increments are absolutely summable. In the fixed-sign case, adding the tail of the error series yields a bounded, eventually monotone sequence. Both alternatives therefore imply convergence of $\{\rho_{2k}\}$.

\begin{lemma}\label{lem1}
For all sufficiently large $k$,
\begin{equation}\label{eq:lanczos-alpha}
|\alpha_{k+1}-\alpha_k|\le C(\alpha_k^2+\vartheta^k),
\end{equation}
where $\vartheta$ is as in Lemma~~\ref{lem:exterior-decay}.
\end{lemma}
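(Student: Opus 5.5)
The plan is to reduce \eqref{eq:lanczos-alpha} to the estimate for $D_k=p_{k+2}-p_k$ from the proof of Lemma~\ref{lem:polys}, now with a sharper bound on the right-hand side of \eqref{eq:factor-difference}. Write $p_k(z)=z^2+b_kz+a_k$. Then the coefficient of $z^3$ in $q_k=p_{k+1}p_k$ is $b_k+b_{k+1}$, so
\[
\alpha_{k+1}-\alpha_k=b_{k+2}-b_k=[z^1]D_k ,
\]
and hence $|\alpha_{k+1}-\alpha_k|\le\|D_k\|$. As in the proof of Lemma~\ref{lem:polys}, \eqref{eq:linear-lower-bound} and \eqref{eq:factor-difference} with $f=D_k$ give
\[
c\|D_k\|^2\le \frac{|\ip{p_kD_k}{r_k^2}_k|}{H_kH_{k+1}} .
\]
So it suffices to show that $|\ip{p_kD_k}{r_k^2}_k|\le C\|D_k\|(\alpha_k^2+\vartheta^k)$.

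To bound this, I split the form $\ip{\cdot}{\cdot}_k$ spectrally into the part over $S$ and the exterior part. The exterior part of $\ip{p_kD_k}{r_k^2}_k$ is bounded by $C\|D_k\|\varepsilon_k\le C\|D_k\|\vartheta^k$, by Lemma~\ref{lem:exterior-decay} and the uniform coefficient bounds. On $S$, $\Pi$ vanishes, so $r_k(\lambda_i)=g_k(\lambda_i)$ with
\[
g_k:=r_k-\Pi ,
\]
a polynomial of degree at most $3$ with leading coefficient $\alpha_k$. The $S$-part is therefore at most $C\|D_k\|\,\|g_k\|^2$. The whole lemma thus reduces to the key claim
\[
\|g_k\|\le C(|\alpha_k|+\vartheta^k),
\]
since then $\|g_k\|^2\le C(\alpha_k^2+\vartheta^{2k})\le C(\alpha_k^2+\vartheta^k)$.

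For the key claim I use three exact orthogonality relations for $r_k$:
\begin{itemize}
\item $\ip{r_k}{1}_k=0$, by \eqref{eqn:qk-1};
\item $\ip{r_k}{p_k}_k=0$ and $\ip{r_k}{zp_k}_k=0$, by \eqref{eq:double-orthogonality} with $f=1$ and $f=z$.
\end{itemize}
Since $\Pi$ vanishes on $S$, each of $\ip{\Pi}{1}_k$, $\ip{\Pi}{p_k}_k$, $\ip{\Pi}{zp_k}_k$ is $O(\varepsilon_k)=O(\vartheta^k)$. Hence $g_k$ satisfies the same three relations up to $O(\vartheta^k)$.

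Next I decompose
\[
g_k=\alpha_k zp_k+c_kp_k+\ell_k ,\qquad \ell_k(z)=\ell_{k,1}z+\ell_{k,0},
\]
which is possible because $p_k$ is monic. The $\alpha_k zp_k$ term contributes $O(|\alpha_k|)$ to each relation, by the uniform bounds. The remaining quadratic part is then solved triangularly:
\begin{itemize}
\item Pairing with $p_k$ gives $c_k\ip{p_k}{p_k}_k=c_kH_k$, because $p_k\perp\ell_k$. With $H_k\ge H_0>0$ this yields $c_k=O(|\alpha_k|+\vartheta^k)$.
\item Pairing with $zp_k$ gives $c_k\ip{p_k}{zp_k}_k+\ell_{k,1}H_k$, using $\ip{zp_k}{\ell_k}_k=\ip{p_k}{z\ell_k}_k=\ell_{k,1}\ip{p_k}{z^2}_k=\ell_{k,1}H_k$. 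This yields $\ell_{k,1}=O(|\alpha_k|+\vartheta^k)$.
\item Pairing with $1$ gives $\ell_{k,0}\ip{1}{1}_k+\ell_{k,1}\ip{z}{1}_k$ plus controlled terms. Since $\ip{1}{1}_k=1$, this yields $\ell_{k,0}=O(|\alpha_k|+\vartheta^k)$.
\end{itemize}
This proves the key claim. Inserting it, dividing by $\|D_k\|$ (the case $D_k=0$ is trivial), and absorbing $H_kH_{k+1}\ge H_0^2$ into $C$ gives \eqref{eq:lanczos-alpha}.

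The main obstacle I expect is the triangular solve: the constants must be uniform in $k$. Positivity of the Gram data must come from $H_k\ge H_0$ and $\ip{1}{1}_k=1$, not from properties of any particular limit vector, since limit vectors may degenerate toward grade~$3$. Working with the full form $\ip{\cdot}{\cdot}_k$ rather than the restriction to $S$ is what avoids this degeneracy, and it is why the argument is organized that way.
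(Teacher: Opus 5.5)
Your proof is correct and follows essentially the paper's route. In both, the three orthogonality relations of $r_k$ against $1,p_k,zp_k$ are solved with constants that depend only on $H_k\ge H_0$ and $\ip{1}{1}_k=1$; this gives $\|r_k-\Pi\|\le C(|\alpha_k|+\vartheta^k)$, hence effectively $\|r_k\|_k^2\le C(\alpha_k^2+\vartheta^k)$, which is fed into the $D_k$ estimate from Lemma~\ref{lem:polys}.

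The differences are only in packaging. The paper writes your triangular solve as the cubic $f_k$ with $\det B_k=-H_k^2$, giving the sharper decomposition $r_k=\Pi+\alpha_kf_k+O(\varepsilon_k)$ that it reuses in Lemma~\ref{lem2}. It also goes through $H_{k+1}-H_k=\|r_k\|_k^2/H_k$ and \eqref{eq:variation}, rather than re-entering the proof of Lemma~\ref{lem:polys} as you do.
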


\begin{proof}
For each $k$, let $f_k(z)=z^3+c_kz^2+b_kz+a_k$ be defined by
\begin{equation}
 \langle f_k,1\rangle_k
 =\langle f_k,p_k\rangle_k
 =\langle f_k,zp_k\rangle_k=0.
 \label{eq:lanczos-cubic}
\end{equation}
Expanding these conditions and using the orthogonality relations \eqref{eq:orthogonality}, together with the identities $\ip{1}{1}=1$, $\ip{z}{1}=\rho_k$, and $\ip{z^2}{p_k}=H_k$, we obtain the linear algebraic system
\[
B_k\begin{bmatrix}a_k\\b_k\\c_k\end{bmatrix}
=-\begin{bmatrix}\ip{z^3}{1}\\\ip{z^3}{p_k}\\\ip{z^3}{zp_k}\end{bmatrix},
\quad\mbox{where}\quad
B_k=\begin{bmatrix}
1&\rho_k&\ip{z^2}{1}\\
0&0&H_k\\
0&H_k&\ip{z^3}{p_k}
\end{bmatrix}.
\]
The identity $\ip{z^2}{p_k}=H_k$ follows because $z^2-p_k$ has degree at most one and is orthogonal to $p_k$. Since $\det B_k=-H_k^2$ and $H_k\ge H_0>0$, the system has a unique solution. Its entries and right-hand side are uniformly bounded, so $B_k^{-1}$ and the coefficients of $f_k$ are uniformly bounded as well.

The polynomial
\[
g_k:=r_k-\Pi-\alpha_k f_k
\]
has degree at most two. Equations \eqref{eqn:qk-1} and \eqref{eq:double-orthogonality} give
\[
\ip{r_k}{1}=\ip{r_k}{p_k}=\ip{r_k}{zp_k}=0.
\]
Since $\Pi$ vanishes on $S$ and all polynomial values involved are uniformly bounded on the fixed spectrum, we have
\[
|\ip{\Pi}{1}|+|\ip{\Pi}{p_k}|+|\ip{\Pi}{zp_k}|
\le C\varepsilon_k,
\quad\|\Pi\|_k^2\le C\varepsilon_k.
\]
The defining orthogonality conditions for $f_k$ and $r_k$ imply
\[
\ip{g_k}{1}=-\ip{\Pi}{1},\quad
\ip{g_k}{p_k}=-\ip{\Pi}{p_k},\quad
\ip{g_k}{zp_k}=-\ip{\Pi}{zp_k}.
\]
Writing $g_k(z)=g_{k,0}+g_{k,1}z+g_{k,2}z^2$, these relations form a system with the same coefficient matrix $B_k$. Its right-hand side is $O(\varepsilon_k)$, so $\|g_k\|\le C\varepsilon_k$ in the coefficient norm. Using $r_k=\Pi+\alpha_k f_k+g_k$, the uniform bounds above, and $\varepsilon_k\le1$, equation \eqref{eq:energy} now gives
\begin{equation}
 H_{k+1}-H_k=\frac{\|r_k\|_k^2}{H_k}
 \le C(\alpha_k^2+\varepsilon_k).
 \label{eq:lanczos-energy-bound}
\end{equation}
Write $p_k(z)=z^2+\beta_kz+\gamma_k$. Since
$q_{k+1}-q_k=p_{k+1}(p_{k+2}-p_k)$, comparison of the cubic coefficients and Lemma~\ref{lem:polys} yield
\[
|\alpha_{k+1}-\alpha_k|=|\beta_{k+2}-\beta_k|
\le C(H_{k+1}-H_k).
\]
Combining the last two inequalities with $\varepsilon_k\le C\vartheta^k$ proves the assertion.
\end{proof}

We now define
\begin{equation}\label{eqn:gram-det}
\Delta_k:=\det G_k,\quad G_k:=[\ip{z^i}{z^j}]_{i,j=0}^3.
\end{equation}
This is the Gram matrix of $v_k,Av_k,A^2v_k,A^3v_k$, so $\Delta_k\ge0$, with equality if and only if $d(A,v_k)\le3$. Its entries, and hence $\Delta_k$, are uniformly bounded.

\begin{lemma}\label{lem2}
For all sufficiently large $k$,
\begin{align}
 \rho_{k+2}-\rho_k
 &=-\kappa_k\alpha_k+E_k\quad\mbox{with}\quad \kappa_k:=\frac{2\Delta_k}{H_k^2H_{k+1}},\quad 0\leq \kappa_k\leq C,\label{eq:lanczos-rho} \\
 |E_k| &\leq C(H_{k+1}-H_k+\vartheta^k)\quad\mbox{and}\quad \sum_{k=0}^\infty |E_k|<\infty,
\label{eq:lanczos-rho1}
\end{align}
and where \(\vartheta\) is as in Lemma~\ref{lem:exterior-decay}.
\end{lemma}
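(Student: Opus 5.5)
The plan is to compute $\rho_{k+2}$ directly from \eqref{eqn:ip2}. This gives
\[
\rho_{k+2}=\ip{z}{1}_{k+2}=\frac{\ip{zq_k}{q_k}_k}{H_kH_{k+1}} .
\]
Substituting $q_k=r_k+H_k$ yields
\[
\ip{zq_k}{q_k}_k=\ip{zr_k}{r_k}_k+2H_k\ip{z}{r_k}_k+H_k^2\rho_k,
\]
and therefore
\[
\rho_{k+2}-\rho_k=\frac{\ip{zr_k}{r_k}_k}{H_kH_{k+1}}+\frac{2\ip{z}{r_k}_k}{H_{k+1}}+\rho_k\Bigl(\frac{H_k}{H_{k+1}}-1\Bigr).
\]
The third term is bounded by $C(H_{k+1}-H_k)$, since $|\rho_k|\le\|A\|$ and $H_{k+1}\ge H_0>0$. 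For the first term, evaluating on the fixed spectrum gives $|\ip{zr_k}{r_k}_k|\le \max|\lambda|\,\|r_k\|_k^2$, and \eqref{eq:energy} bounds this by $C(H_{k+1}-H_k)$. Both terms go into $E_k$.

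For the middle term I would reuse the decomposition from the proof of Lemma~\ref{lem1}, namely $r_k=\Pi+\alpha_kf_k+g_k$ with $\|g_k\|\le C\varepsilon_k$. We have $|\ip{z}{\Pi}_k|\le C\varepsilon_k$, because $\Pi$ vanishes on $S$, so only exterior coefficients contribute. Also $|\ip{z}{g_k}_k|\le C\|g_k\|\le C\varepsilon_k$. By Lemma~\ref{lem:exterior-decay} both are $O(\vartheta^k)$ and go into $E_k$. What remains is
\[
\frac{2\alpha_k\ip{z}{f_k}_k}{H_{k+1}},
\]
so everything reduces to the exact identity
\[
\ip{f_k}{z}_k=-\frac{\Delta_k}{H_k^2}.
\]

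The identity $\ip{f_k}{z}_k=-\Delta_k/H_k^2$ is the step I expect to be the main obstacle. My first attempt would use the basis $1,z,p_k,zp_k$ of the polynomials of degree at most three. It is obtained from $1,z,z^2,z^3$ by a unit upper-triangular change of basis, so its Gram matrix $\widetilde G_k$ satisfies $\det\widetilde G_k=\Delta_k$. By \eqref{eq:orthogonality}, $p_k$ is orthogonal to $1$ and $z$, and by \eqref{eq:double-orthogonality}-type relations many other entries vanish or equal $H_k$.

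Next I would write $f_k=zp_k+x\,p_k+y\,z+w$, which is possible since $f_k$ is monic cubic and $zp_k$ is monic cubic. Imposing $f_k\perp 1,p_k,zp_k$ gives a linear system whose matrix is a submatrix of $\widetilde G_k$. Solving it by Cramer's rule (equivalently, by a Schur complement of $\widetilde G_k$) should express $\ip{f_k}{z}_k$ as $\Delta_k$ divided by a $3\times3$ minor. Using $\ip{1}{p_k}=\ip{z}{p_k}=0$, $\ip{p_k}{p_k}=H_k$ and $\ip{z}{zp_k}=\ip{z^2}{p_k}=H_k$, that minor should reduce to $\pm H_k^2$, mirroring $\det B_k=-H_k^2$ in the proof of Lemma~\ref{lem1}. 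The sign would be fixed by a check in the simplest case.

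If the identity came out with a different but still nonnegative, bounded factor, the argument would go through with that $\kappa_k$ instead. Nonnegativity of $\kappa_k$ comes from $\Delta_k\ge0$, being a Gram determinant. The bound $\kappa_k\le C$ comes from the uniform boundedness of $\Delta_k$ together with $H_k\ge H_0>0$.

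Finally, the summability $\sum_k|E_k|<\infty$ follows from $|E_k|\le C(H_{k+1}-H_k+\vartheta^k)$. The first part telescopes to at most $C(H-H_0)$ by (F1), and the second part is a geometric series.
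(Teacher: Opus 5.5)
Your proposal is correct and follows essentially the paper's route. You use the same expansion of $\rho_{k+2}=\ip{zq_k}{q_k}_k/(H_kH_{k+1})$ with $q_k=r_k+H_k$; the paper only merges your first and third terms into $\ip{z-\rho_k}{r_k^2}_k/(H_kH_{k+1})$. You also use the same decomposition $r_k=\Pi+\alpha_kf_k+g_k$ and the same determinant identity, which the paper obtains from $L_kG_kR_k$ with rows $(1,z,p_k,zp_k)$ and columns $(1,z,z^2,f_k)$. That is exactly your Cramer/column-operation argument.

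No test case is needed to fix the sign. Adding $w$, $y$, $x$ times columns $1,2,3$ of $\widetilde G_k$ to column $4$ turns that column into $[0,\ip{z}{f_k}_k,0,0]^T$. Expanding along it gives $\Delta_k=(-1)^{2+4}\ip{z}{f_k}_k\cdot(-H_k^2)$ directly, even when $\Delta_k=0$, so $\kappa_k$ comes out exactly as stated.
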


\begin{proof}
Let $f_k$ and $g_k$ be the auxiliary polynomials from the proof of Lemma~\ref{lem1}. Write $p_k(z)=z^2+\beta_kz+\gamma_k$ and use the triangular matrices
\[
L_k=\begin{bmatrix}
1&0&0&0\\0&1&0&0\\\gamma_k&\beta_k&1&0\\0&\gamma_k&\beta_k&1
\end{bmatrix},\quad
R_k=\begin{bmatrix}
1&0&0&a_k\\0&1&0&b_k\\0&0&1&c_k\\0&0&0&1
\end{bmatrix}.
\]
Left multiplication by $L_k$ replaces the row polynomials
$(1,z,z^2,z^3)$ by $(1,z,p_k,zp_k)$, and right multiplication by $R_k$ replaces the column polynomials by $(1,z,z^2,f_k)$. Both matrices have determinant one. The last column of $L_kG_kR_k$ is
$[0,\ip{z}{f_k},0,0]^T$. Expansion along that column gives
\begin{equation}
\Delta_k =\langle z,f_k\rangle_k \det(B_k)=-H_k^2\langle z,f_k\rangle_k.
 \label{eq:lanczos-determinant}
\end{equation}
This identity also holds when the grade is three.

Since $\ip{z}{\Pi}=O(\varepsilon_k)$ and $\|g_k\|\le C\varepsilon_k$, the decomposition $r_k=\Pi+\alpha_k f_k+g_k$ gives
\begin{equation}
 \langle z,r_k\rangle_k
 =-\frac{\Delta_k}{H_k^2}\alpha_k+O(\vartheta^k).
 \label{eq:lanczos-first-moment}
\end{equation}
Equations \eqref{eqn:ip2} and \eqref{eq:energy}, together with $q_k=H_k+r_k$, yield the exact identity
\[
\rho_{k+2}-\rho_k
=\frac{2\ip{z}{r_k}}{H_{k+1}}
+\frac{\ip{z-\rho_k}{r_k^2}}{H_kH_{k+1}}.
\]
The first term equals $-\kappa_k\alpha_k+O(\vartheta^k)$.
Using $|\rho_k|\le\|A\|$ and \eqref{eq:energy}, the second term is bounded in absolute value by
\[
\frac{2\|A\|\|r_k\|_k^2}{H_kH_{k+1}}
=\frac{2\|A\|}{H_{k+1}}(H_{k+1}-H_k)
\le C(H_{k+1}-H_k).
\]
This proves the recurrence and the stated bound for $E_k$. Since
$H_k\uparrow H$ and $0<\vartheta<1$, telescoping the height increments and summing the geometric series give $\sum_k|E_k|<\infty$.
\end{proof}

Using Lemmas~\ref{lem1} and~\ref{lem2} we can now show that the sequence $\{\rho_{2k}\}$ converges.

\begin{lemma}
For any fixed $\eta\in(\vartheta,1)$, where \(\vartheta\) is as in Lemma~\ref{lem:exterior-decay}, exactly one of the following
alternatives holds:
\begin{enumerate}
\item[(i)] $|\alpha_k|<\eta^k$ for all sufficiently large $k$, and in this case $\alpha_k=O(\eta^k)$.
\item[(ii)] $|\alpha_k|\ge\eta^k$ for all sufficiently large $k$, and $\alpha_k$ has a fixed nonzero sign on this tail.
\end{enumerate}
Moreover, the sequence $\{\rho_{2k}\}$ has finite total variation, i.e., $\sum_{k=0}^{\infty}|\rho_{2k+2}-\rho_{2k}|<\infty$, and hence it converges.
\end{lemma}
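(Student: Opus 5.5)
The plan is to derive the dichotomy from Lemma~\ref{lem1} and then combine it with the recurrence of Lemma~\ref{lem2}. Fix $\eta\in(\vartheta,1)$. Since $\alpha_k\to0$, Lemma~\ref{lem1} gives, for large $k$,
$$|\alpha_{k+1}-\alpha_k|\le C(\alpha_k^2+\vartheta^k)\le \delta_k|\alpha_k|+C\vartheta^k,$$
where $\delta_k:=C|\alpha_k|\to0$.

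The first step is a ``trapping'' claim: once $|\alpha_k|<\eta^k$ at some large index $k$, the same inequality persists at $k+1$. Indeed,
$$|\alpha_{k+1}|\le(1+\delta_k)\eta^k+C\vartheta^k.$$
The right-hand side is at most $\eta^{k+1}$ as soon as $1+\delta_k+C(\vartheta/\eta)^k\le\eta$. This does \emph{not} hold, since $\eta<1$, so the naive trapping fails. I therefore modify the first step as follows. If $|\alpha_k|<\eta^k$ occurs infinitely often, I will show it occurs eventually, in a form sufficient for the conclusion. To do this, pick $\eta'\in(\eta,1)$ and use the multiplicative bound
$$|\alpha_{k+1}|\le(1+\delta_k)|\alpha_k|+C\vartheta^k.$$
This is awkward because $|\alpha|$ need not shrink. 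The cleaner route is to work with the sign and size simultaneously, and this is the main obstacle.

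What I will actually aim for is the following. Suppose (ii) fails infinitely often, so $|\alpha_k|<\eta^k$ for infinitely many $k$. Suppose also that (i) fails, so $|\alpha_k|\ge\eta^k$ infinitely often. Then there are infinitely many upward crossings from below $\eta^k$ to above $\eta^k$. Near a crossing the relative increment is at most $\delta_k+C(\vartheta/\eta)^k$, which is tiny. So $|\alpha_{k+1}|\le(1+o(1))\eta^k$, while $\eta^{k+1}=\eta\cdot\eta^k$. An upward crossing thus \emph{is} possible, because the threshold itself decreases geometrically. The resolution I will try is this: since $|\alpha|$ changes by a factor $1\pm o(1)$ per step whenever $|\alpha_k|\ge\eta^k\gg\vartheta^k$, the condition $|\alpha_k|\ge\eta^k$ propagates forward. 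Precisely,
$$|\alpha_{k+1}|\ge(1-\delta_k)|\alpha_k|-C\vartheta^k\ge\bigl(1-\delta_k-C(\vartheta/\eta)^k\bigr)\eta^k\ge\eta^{k+1}$$
for large $k$, because $1-o(1)\ge\eta$. Hence the set where $|\alpha_k|\ge\eta^k$ is eventually forward invariant. Either it is eventually everything, giving (ii), or it is eventually empty, giving (i). Mutual exclusivity is obvious. For the sign in (ii), note that $|\alpha_{k+1}-\alpha_k|\le(\delta_k+C(\vartheta/\eta)^k)|\alpha_k|<|\alpha_k|$, so $\alpha_{k+1}$ has the same sign as $\alpha_k$.

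For the Rayleigh quotients I use Lemma~\ref{lem2}, which gives $\rho_{k+2}-\rho_k=-\kappa_k\alpha_k+E_k$ with $0\le\kappa_k\le C$ and $\sum_k|E_k|<\infty$. In case (i) the increments are bounded by $C\eta^k+|E_k|$, which is summable, so the total variation is finite. In case (ii), say $\alpha_k>0$ on the tail (the other sign is symmetric). Then the sequence $\sigma_\ell:=\rho_{2\ell}-\sum_{m<\ell}E_{2m}$ is eventually nonincreasing, since $\sigma_{\ell+1}-\sigma_\ell=-\kappa_{2\ell}\alpha_{2\ell}\le0$. It is also bounded, because $|\rho_k|\le\|A\|$ and the series of $E_k$ converges. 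Hence $\sum_\ell\kappa_{2\ell}\alpha_{2\ell}=\sum_\ell|\kappa_{2\ell}\alpha_{2\ell}|<\infty$. It follows that
$$\sum_\ell|\rho_{2\ell+2}-\rho_{2\ell}|\le\sum_\ell\kappa_{2\ell}|\alpha_{2\ell}|+\sum_\ell|E_{2\ell}|<\infty.$$
Finite total variation implies that $\{\rho_{2k}\}$ is Cauchy, so it converges. The step requiring the most care is the forward-invariance estimate, which rests on $\delta_k\to0$ together with $(\vartheta/\eta)^k\to0$.
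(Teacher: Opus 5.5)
Your argument is correct and is essentially the paper's. The paper also shows that at any large index with $|\alpha_k|\ge\eta^k$ the relative change $|\alpha_{k+1}-\alpha_k|/|\alpha_k|$ is below $1-\eta$, which preserves both the sign and the threshold. In case (ii) it uses $\widehat\rho_k=\rho_{2k}+\sum_{\ell\ge k}E_{2\ell}$, which differs from your $\sigma_k$ only by the constant $\sum_{\ell\ge0}E_{2\ell}$.

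The abandoned ``trapping'' attempt and the infinitely-often case analysis can simply be deleted. Forward invariance of $\{k:|\alpha_k|\ge\eta^k\}$ alone already yields the dichotomy.
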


\begin{proof}
We have $\alpha_k\to0$, and the estimate of Lemma~\ref{lem1} holds for all sufficiently large $k$. Choose $K_0$ so large that this estimate holds and that
\[
C|\alpha_k|+C(\vartheta/\eta)^k<1-\eta
\]
for all $k\geq K_0$. Such an index exists because both terms on the left tend to zero.

If $|\alpha_k|<\eta^k$ for every $k\ge K_0$, then alternative (i) holds. Otherwise, there is an index $k_0\ge K_0$ satisfying $|\alpha_{k_0}|\ge\eta^{k_0}$. At any index $k\ge K_0$ with $|\alpha_k|\ge\eta^k$, Lemma~\ref{lem1} gives
\[
\frac{|\alpha_{k+1}-\alpha_k|}{|\alpha_k|}
\le C|\alpha_k|+\frac{C\vartheta^k}{|\alpha_k|}
\le C|\alpha_k|+C(\vartheta/\eta)^k<1-\eta.
\]
Thus, $\alpha_{k+1}$ has the same sign as $\alpha_k$, and $|\alpha_{k+1}|>\eta|\alpha_k|\ge\eta^{k+1}$. Induction shows that the threshold remains satisfied and the sign remains unchanged for every $k\ge k_0$. Hence alternative (ii) holds. The two alternatives are mutually exclusive.

In order to show that the sequence $\{\rho_{2k}\}$ has finite total variation we consider any fixed $\eta\in(\vartheta,1)$. Note that finitely many initial terms do not affect the conclusion.

In alternative (i) we have $\alpha_k=O(\eta^k)$. The recurrence of Lemma~\ref{lem2},
the boundedness of $\kappa_k$, and the summability of $E_k$ imply, for
sufficiently large $J$,
\[
\sum_{k=J}^{\infty}|\rho_{2k+2}-\rho_{2k}|
\le C\sum_{k=J}^{\infty}|\alpha_{2k}|
+\sum_{k=J}^{\infty}|E_{2k}|<\infty.
\]
In alternative (ii), define
\[
\widehat\rho_k:=\rho_{2k}+\sum_{\ell=k}^{\infty}E_{2\ell}.
\]
The series is well defined by Lemma~\ref{lem2}, and $\widehat\rho_{k+1}-\widehat\rho_k=-\kappa_{2k}\alpha_{2k}$. Since $\kappa_{2k}\ge0$ and $\alpha_{2k}$ has a fixed sign eventually, $\widehat\rho_k$ is eventually monotone. The series is bounded because $|\rho_{2k}|\le\|A\|$ and the error tail tends to zero. Hence it converges and has finite total variation. Moreover,
\[
\rho_{2k+2}-\rho_{2k}
=(\widehat\rho_{k+1}-\widehat\rho_k)+E_{2k}.
\]
The summability of $E_{2k}$ therefore gives finite total variation of $\rho_{2k}$ as well, which finishes the proof.
\end{proof}

Finally, we write \(\rho_*:=\lim_{k\rightarrow\infty}\rho_{2k}\). Any
\(v_*\in\Sigma_*^A\) has the form
\(v_*=\sum_{i=1}^4w_i u_i\), with zero coefficients allowed. As outlined at the beginning of this section, normalization, the Rayleigh quotient, and the limiting
orthogonality conditions for $v_*$ give
\[
 \begin{bmatrix}
 1&1&1&1\\
 \lambda_1&\lambda_2&\lambda_3&\lambda_4\\
 p_*(\lambda_1)&p_*(\lambda_2)&p_*(\lambda_3)&p_*(\lambda_4)\\
 \lambda_1p_*(\lambda_1)&\lambda_2p_*(\lambda_2)&
 \lambda_3p_*(\lambda_3)&\lambda_4p_*(\lambda_4)
 \end{bmatrix}
 \begin{bmatrix}w_1^2\\w_2^2\\w_3^2\\w_4^2\end{bmatrix}
 =\begin{bmatrix}1\\\rho_*\\0\\0\end{bmatrix}.
\]
The polynomials \(1,z,p_*,zp_*\) form a basis of the vector space of polynomials
of degree at most three. Evaluation at four distinct points therefore yields a nonsingular matrix, so the squared coefficients \(w_i^2\) are uniquely determined. There are only finitely many sign choices, so $\Sigma_A^*$ is finite, and hence a singleton by fact (F3). Thus \(\{v_{2k}\}\) converges to a single limit vector \(v_*\), and 
\[
 v_{2k+1}=\frac{p_{2k}(A)v_{2k}}{\sqrt{H_{2k}}}
 \rightarrow\frac{p_*(A)v_*}{\sqrt H}
\]
gives also the convergence of the odd-index sequence, which finishes the proof of Theorem~\ref{thm:main}.

\section{Original proof, related developments, and the current paper}

\textbf{Original proof.} An initial proof of Theorem~\ref{thm:main} was found by the second author using Codex with GPT-5.6 Sol in Ultra reasoning. During their proof search, the agents were instructed to create independent notes on intermediate lemmas. Each document communicating a lemma was independently reviewed by two subagents for errors and notation quality. During this process, the following two strategies were particularly effective: explicitly claim that there is an error in the proof, and ask if the generated document was written by the same authors as~\cite{FLT}. The first instruction encouraged adversarial checking rather than confirmation of the proposed argument, while the second proved useful for detecting deviations from the notation and mathematical style of~\cite{FLT}. The main agent was instructed to iterate on this feedback until the reviewing agents found no major issues, which happened on July 26, 2026. The Codex-generated paper was posted after an initial (human) review on the second author's personal webpage on July 30, 2026\footnote{https://jarek.ai/papers/proof-of-the-forsythe-conjecture-for-s-2.pdf}. 

Working with Codex on the proof revealed two major failure modes:

(1) Agents easily got lost in computational experiments. Since \(\|v_{k+2} - v_k\|\rightarrow 0\) quickly, numerical experiments cannot indicate convergence. Alternatively, the agents ran dozens of symbolic experiments, attempting to find particular families for which convergence could be established. Experiments were therefore explicitly forbidden during the search for the proof, and the agents were instructed to solely rely on symbolic manipulation.

(2) Agents reliably drifted away from the original notation, often introducing unnecessary complexity. This complexity, paired with significant LLM slop, often obscured insights that would have been obvious if written with notation and in language used by human experts. Therefore, the notation was first fixed by running an audit of existing expert papers. Subsequently, this audit was used to anchor agents during their search.

\textbf{Related developments.} During our initial (human) review of the Codex-generated paper, we became aware of two proofs of the original Forsythe conjecture for $s=2$ generated with GPT-5.6 Sol almost at the same time. One was sent to us directly by Richard Peng (Carnegie Mellon University) on July 30, 2026, and one was published in arXiv on August 3, 2026~\cite{CST-s2}. These two papers consider a slightly different problem, and the proofs they give seemed to be considerably longer and more technical than the proof in our original Codex-generated paper. Subsequently, the authors of~\cite{CST-s2}  published a 150-page paper claiming a complete resolution of the original Forsythe conjecture on arXiv on September 4, 2026~\cite{CST-all}. According to that paper, the Forsythe conjecture is true for $s=1,2,3$ and false for $s\geq 4$. If this holds, we expect that for $A=A^T\in\R^{n\times n}$ the ACI($s$) conjecture posed in~\cite{FLT} is true for $s=3$ and false for $s\geq 4$. (Its truth for $s=1$ is shown in~\cite{FLT}, and for $s=2$ in the current paper.)

\textbf{The current paper.} The current paper is an extensively rewritten and improved version of the one that was generated by the second author using Codex on July 26, 2026. Our motivation for writing it was to produce a proof of Theorem~\ref{thm:main} that is instructive and transparent, so that it can enhance the knowledge of interested human readers. Several paragraphs throughout the paper therefore explain the strategies and main ideas behind all steps of the proof. As described above, the agents were provided with an audit of the notation and terminology used in the existing expert literature. However, the Codex-generated paper still contained unnecessary or uncommon notation as well as several arguments that were too condensed or too technical to be readily comprehensible. Moreover, while studying the Codex-generated proof, we found a different strategy for completing the argument via the convergence of the Rayleigh quotients, which we present in Section~\ref{sec:S4}. In comparison with the Codex-generated proof this strategy is a better fit to the existing literature, because it is motivated by the interpolation argument following~\cite[Theorem~5]{FLT}. Moreover, we think that it is more elegant, and more natural in the given mathematical context. Thus, the proof presented here incorporates substantial mathematical improvements of the Codex-proof that were found during our subsequent analysis.

\end{document}